\documentclass[11pt]{amsart}
\usepackage{amsmath,amssymb,amsthm,fancyhdr}
\usepackage{hyperref,color}
\usepackage[letterpaper,margin=1in]{geometry}
\usepackage[dvipsnames]{xcolor}

\setlength{\parskip}{1em}
\usepackage[hang,flushmargin]{footmisc}

\newtheorem{theorem}{Theorem}[section]

\newtheorem{prop}[theorem]{Proposition}

\newtheorem{corollary}[theorem]{Corollary}
\newtheorem*{problem}{Problem}

\usepackage[english]{babel}

\DeclareMathOperator{\Ap}{Ap}

\usepackage[T1]{fontenc} 
\usepackage{multirow}

\def\N{\mathbb{N}}

\title{On the Genus of a quotient of a numerical semigroup}
\makeatletter
\let\thetitle\@title        
\let\theauthor\@author      

\date{}

\author[Adeniran]{Ayomikun Adeniran}
\address{Department of Mathematics, Texas A\&M University, United States}
\email{\textcolor{blue}{\href{mailto:ayoijeng88@tamu.edu}{ayoijeng88@tamu.edu}}}

\author[Butler]{Steve Butler}
\address{Department of Mathematics, Iowa State University, United States}
\email{\textcolor{blue}{\href{mailto:butler@iastate.edu}{butler@iastate.edu}}}

\author[Defant]{Colin Defant}
\address{Department of Mathematics, Princeton University, United States}
\email{\textcolor{blue}{\href{mailto:cdefant@princeton.edu}{cdefant@princeton.edu}}}

\author[Gao]{Yibo Gao}
\address{Department of Mathematics, MIT, United States}
\email{\textcolor{blue}{\href{mailto:gaoyibo@mit.edu}{gaoyibo@mit.edu}}}

\author[Harris]{Pamela E. Harris}
\address{Department of Mathematics and Statistics, Williams College, United States}
\email{\textcolor{blue}{\href{mailto:peh2@williams.edu}{peh2@williams.edu}}}

\author[Hettle]{Cyrus Hettle}
\address{School of Mathematics, Georgia Institute of Technology}
\email{\textcolor{blue}{\href{mailto:chettle@gatech.edu}{chettle@gatech.edu}}}

\author[Liang]{Qingzhong Liang}
\address{Department of Mathematics, Duke University}
\email{\textcolor{blue}{\href{mailto:qingzhong.liang@duke.edu}{qingzhong.liang@duke.edu}}}

\author[Nam]{Hayan Nam}
\address{Department of Mathematics, University of California, Irvine }
\email{\textcolor{blue}{\href{mailto:hayann@uci.edu}{hayann@uci.edu}}}

\author[Volk]{Adam Volk}
\address{Department of Mathematics, University of Nebraska-Lincoln}
\email{\textcolor{blue}{\href{mailto:avolk@huskers.unl.edu}{avolk@huskers.unl.edu}}}
\begin{document}
\maketitle

\begin{abstract}
We find a relation between the genus of a quotient of a numerical semigroup $S$ and the genus of $S$ itself. We use this identity to compute the genus of a quotient of $S$ when $S$ has embedding dimension $2$. We also exhibit identities relating the Frobenius numbers and the genus of quotients of numerical semigroups that are generated by certain types of arithmetic progressions.
\end{abstract}

\section{Introduction}

Throughout this paper, let $\N$ denote the set of nonnegative integers. A \emph{numerical semigroup} is a subset of $\N$ that is closed under addition, contains $0$, and has finite complement in $\N$. Given positive integers $a_1, \ldots, a_n$ satisfying $\gcd(a_1,\ldots,a_n)=1$, we write \[\langle a_1\ldots,a_n\rangle=\{c_1a_1+\cdots+c_na_n:c_1,c_2,\ldots,c_n\in\N\}.\] The set $\langle a_1\ldots,a_n\rangle$ is a numerical semigroup called the numerical semigroup \emph{generated by the set} $\{a_1,\ldots,a_n\}$. It is well known that every numerical semigroup is of this form~\cite[Theorem 2.7]{del}. That is, every numerical semigroup is generated by a finite set of positive integers. Furthermore, every numerical semigroup has a unique set of generators that is minimal in the sense that no proper subset of the generating set generates the same numerical semigroup.

When studying a numerical semigroup $S$, it is useful to consider the Hilbert series 
\begin{equation}\label{eq:HSx}
H_S(x)=\sum_{s\in S}x^s
\end{equation}
and the semigroup polynomial
\begin{equation}\label{eq:PSx}
P_S(x)=(1-x)H_S(x)=1-(1-x)\sum\limits_{s \not\in S}x^s.
\end{equation}
Note that $P_S(x)$ is a polynomial because $S$ has finite complement in $\N$. 

There are several fundamental invariants of a numerical semigroup $S$. The \emph{Frobenius number} and \emph{genus} of $S$, denoted $F(S)$ and $g(S)$, respectively, are defined by \[F(S)=\max(\N\setminus S)\hspace{.5cm}\text{and}\hspace{.5cm}g(S)=|\N\setminus S|.\] The size of the unique minimal generating set of $S$ is called the \emph{embedding dimension} of $S$. We say $S$ is $d$-\emph{symmetric} if $n \notin S$ implies $F(S)-n \in S$ whenever $n$ is a positive multiple of $d$.  A $1$-symmetric numerical semigroup is simply called \emph{symmetric}.

It is difficult to give general formulas for the invariants of numerical semigroups. 
However, some special types of numerical semigroups have received a large amount of attention, and the invariants of these numerical semigroups are often well-understood. 
This includes numerical semigroups with small embedding dimensions \cite{Davison, Mosc1, Syl}, numerical semigroups generated by arithmetic progressions \cite{MSS,Num}, and $d$-symmetric numerical semigroups \cite{Stra}. For example, the following very fundamental result is due to Sylvester. 

\begin{theorem}[\cite{Syl}]\label{SylvesterTheorem}
If $a$ and $b$ are relatively prime positive integers, then \[F(\langle a,b\rangle)=ab-a-b\quad\text{and}\quad g(\langle a,b\rangle)=(a-1)(b-1)/2.\]
\end{theorem}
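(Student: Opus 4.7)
The plan is to prove the Frobenius-number statement first and then derive the genus formula from it via a symmetry (involution) argument.

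For the Frobenius number, I would split into two claims. First, to show that $ab-a-b$ is not representable as $xa+yb$ with $x,y\in\N$: assume for contradiction that $ab-a-b=xa+yb$, rearrange to $ab=(x+1)a+(y+1)b$, and use $\gcd(a,b)=1$ to deduce $a\mid (y+1)$ and $b\mid (x+1)$. Since $x+1,y+1$ are positive, this forces $x+1\geq b$ and $y+1\geq a$, giving $xa+yb\geq 2ab-a-b$, a contradiction. Second, to show every $n>ab-a-b$ lies in $\langle a,b\rangle$: because $\gcd(a,b)=1$, every integer $n$ has a unique representation $n=xa+yb$ with $0\leq x\leq b-1$ and $y\in\mathbb{Z}$. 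If $y<0$ then $y\leq -1$ and $xa+yb\leq(b-1)a-b=ab-a-b$, contradicting $n>ab-a-b$. Hence $y\geq 0$, and $n\in\langle a,b\rangle$.

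For the genus, the cleanest approach is to establish that $\langle a,b\rangle$ is symmetric, i.e., that the map $\varphi(n)=F(S)-n$ defines a bijection between $S\cap\{0,1,\ldots,F(S)\}$ and $\N\setminus S$. Using the unique representation $n=xa+yb$ with $0\leq x\leq b-1$, one computes
\[
F(S)-n=(ab-a-b)-(xa+yb)=(b-1-x)a+(-1-y)b,
\]
which is again of the form $x'a+y'b$ with $0\leq x'=b-1-x\leq b-1$ and $y'=-1-y$. Then $n\in S$ iff $y\geq 0$ iff $y'\leq -1$ iff $F(S)-n\notin S$, establishing the claimed bijection. It also shows each $n$ in the range $[0,F(S)]$ is either in $S$ or a gap but not both (already obvious), and each gap pairs with an element of $S$ in that range.

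Counting via the involution, $|S\cap\{0,\ldots,F(S)\}|=g(S)$, and since these two sets partition $\{0,1,\ldots,F(S)\}$, we get $2g(S)=F(S)+1=ab-a-b+1=(a-1)(b-1)$, giving $g(S)=(a-1)(b-1)/2$. The main conceptual obstacle is the symmetry step, but the unique-representation trick with $0\leq x\leq b-1$ handles both the non-representability argument for $F(S)$ and the involution for $g(S)$ in one stroke, so little else is required beyond book-keeping.
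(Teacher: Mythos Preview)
Your argument is correct: the non-representability of $ab-a-b$, the representability of every larger integer via the canonical form $n=xa+yb$ with $0\le x\le b-1$, and the symmetry involution $n\mapsto F(S)-n$ all go through as you describe, and the count $2g(S)=F(S)+1$ follows.

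However, there is nothing to compare against: the paper does not prove this theorem. It is stated as Theorem~\ref{SylvesterTheorem} with a citation to Sylvester's 1882 paper and is used as a black box (e.g., in the proof of Theorem~\ref{d=2}). So your write-up supplies a proof where the paper gives none; it is a standard textbook treatment and would be entirely appropriate as a self-contained justification of the quoted result.
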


The \emph{quotient} of a numerical semigroup $S$ by a positive integer $d$ is the set \[S/d=\{x \in \N:dx \in S\}.\]
It is not difficult to see that $S/d$ is also a numerical semigroup. Thus, we define the Frobenius number $F(S/d)$ and the genus $g(S/d)$ as above. Recently, there has been a flurry of investigation of invariants of quotients of numerical semigroups by positive integers~\cite{Ze,Mos1,Mosc2,RosGar,RU}. 
For example, the following result will prove particularly useful for our purposes.  
\begin{theorem}[\cite{Stra}]
For a $d$-symmetric numerical semigroup $S$, we have
\[F(S/d) = \frac{F(S)-x}{d},\] where $x$ is the smallest positive element of $S$ satisfying $x \equiv F(S) \pmod d$.
\end{theorem}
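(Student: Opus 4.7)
Set $F = F(S)$ and recall that, by definition, $F(S/d) = \max\{y \in \N : dy \notin S\}$. The plan is to prove the identity by sandwiching $F(S/d)$ between $(F-x)/d$ from below and above. Divisibility is automatic: since $x \equiv F \pmod d$, the quantity $(F-x)/d$ is an integer, and it will be verified to be the correct Frobenius number of $S/d$ once both bounds are in place.

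For the lower bound $F(S/d) \geq (F-x)/d$, the idea is to show directly that $F - x$ is a gap of $S$. Suppose for contradiction that $F - x \in S$. Since $x \in S$ and $S$ is closed under addition, this would force $F = (F-x) + x \in S$, contradicting the definition of the Frobenius number. Hence $F - x \notin S$, which means $d \cdot (F-x)/d = F - x \notin S$, so $(F-x)/d$ lies outside $S/d$ and is a gap. This step uses only the definition of the Frobenius number and closure of $S$, with no appeal to $d$-symmetry.

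For the upper bound $F(S/d) \leq (F-x)/d$, I would take any $y \in \N$ with $y > (F-x)/d$ and argue that $dy \in S$. If $dy > F$, there is nothing to prove. Otherwise $dy \leq F$, and I would proceed by contradiction, assuming $dy \notin S$. Because $y > 0$, the element $dy$ is a positive multiple of $d$ not in $S$, so the $d$-symmetry hypothesis forces $F - dy \in S$. But now $F - dy \in S$ satisfies $F - dy \equiv F \pmod d$ and $0 \leq F - dy < x$; when $F - dy > 0$, this contradicts the minimality of $x$ among positive elements of $S$ in that residue class.

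The main obstacle I anticipate is the borderline situation $F - dy = 0$, corresponding to $d \mid F$ and $y = F/d$. In that case $F/d$ is genuinely a gap of $S/d$, and reconciling this with the stated formula requires a careful convention for $x$ (e.g., allowing $x = 0$ when $F$ itself lies in the residue class of a semigroup element) or a short supplementary argument verifying that the minimal positive representative still produces the correct value. Apart from this delicate bookkeeping, everything reduces to closure of $S$ under addition, the defining property of $F$, and a single invocation of the $d$-symmetry hypothesis.
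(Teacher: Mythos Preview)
The paper does not prove this theorem; it is quoted from Strazzanti as a background result, so there is no in-paper argument to compare your proposal against. Your two-sided sandwiching argument is the natural approach and is correct whenever $d\nmid F(S)$: the lower bound uses only closure of $S$ under addition, and the upper bound is exactly where $d$-symmetry enters, via the minimality of $x$.

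The edge case you flag, $d\mid F(S)$, is a genuine defect in the statement as recorded here, not merely a proof obstacle. For instance, $S=\langle 3,4\rangle$ has $F(S)=5$ and is $5$-symmetric (the only positive multiple of $5$ missing from $S$ is $5$ itself, and $F(S)-5=0\in S$), yet $F(S/5)=1$ while the formula with the smallest positive $x=10$ yields $(5-10)/5=-1$. Your suggested remedy of allowing $x=0$ does repair things: replacing ``smallest positive'' by ``smallest nonnegative'' makes $x=0$ whenever $d\mid F(S)$, and then both halves of your sandwich go through without exception (in the upper bound, $F-dy\in S$ together with $F-dy\equiv F\pmod d$ forces $F-dy\geq x$, i.e.\ $y\leq (F-x)/d$, directly contradicting $y>(F-x)/d$). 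So your argument is complete once the statement is read with that amendment.
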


One motivation for studying quotients of numerical semigroups comes from the study of \emph{proportionally modular Diophantine inequalities}, which are Diophantine inequalities of the form $ax\pmod b\leq cx$ for some fixed positive integers $a,b,c$. It turns out that the set of nonnegative integer solutions to a proportionally modular Diophantine inequality form a numerical semigroup; a numerical semigroup obtained in this way is called a \emph{proportionally modular} numerical semigroup. Robles-P\'erez and Rosales \cite{RR} have shown that a numerical semigroup is proportionally modular if and only if it is of the form $\langle a,a+1\rangle/d$ for some positive integers $a$ and $d$. Furthermore, Delgado, Garc\'{i}a-S\'{a}nchez, and Rosales \cite{del} have remarked that there is no known example of a numerical semigroup that is not of the form $\langle a,b,c\rangle/d$. It is natural to study quotients of the special numerical semigroups whose invariants are already well understood. The current paper is focused on proving the following theorem and then applying it to gain information about quotients of numerical semigroups of the form $\langle a,b\rangle$, $\langle a,a+k,a+2k\rangle$, or $\langle a,a+k,\ldots,a+(a-1)k\rangle$. 

\begin{theorem}\label{main}
Let $S$ be a numerical semigroup, and let $d$ be a positive integer. We have
\begin{align}
g(S/d)&=\frac{1}{d}\left[g(S)+\frac{d-1}{2}-\sum\limits_{i=1}^{d-1}H_S({\zeta_d}^i)\right],
\end{align}
where $\zeta_d$ is a primitive $d^\text{th}$ root of unity. 
\end{theorem}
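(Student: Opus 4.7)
The plan is to rewrite the left-hand side combinatorially and then apply a standard roots-of-unity filter. First I would observe that $g(S/d)$ has a clean reinterpretation in terms of $S$ itself: by definition $x\in S/d$ iff $dx\in S$, so
\[ g(S/d)=|\{x\in\N:dx\notin S\}|=|\{n\in\N\setminus S:d\mid n\}|, \]
that is, $g(S/d)$ counts the gaps of $S$ that are divisible by $d$.

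Next I would encode this count with the orthogonality relation for $d$-th roots of unity, $\mathbf{1}[d\mid n]=\tfrac{1}{d}\sum_{i=0}^{d-1}\zeta_d^{in}$. Letting $G=\N\setminus S$ and $H_G(x)=\sum_{n\in G}x^n$ (a polynomial, since $G$ is finite), this yields
\[ g(S/d)=\frac{1}{d}\sum_{i=0}^{d-1}H_G(\zeta_d^i)=\frac{1}{d}\Bigl[g(S)+\sum_{i=1}^{d-1}H_G(\zeta_d^i)\Bigr], \]
the $i=0$ term contributing $|G|=g(S)$.

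Now I would relate $H_G$ to $H_S$ via the identity $P_S(x)=(1-x)H_S(x)=1-(1-x)H_G(x)$ from \eqref{eq:PSx}. Dividing by $1-x$ gives the rational-function identity
\[ H_G(x)=\frac{1}{1-x}-H_S(x), \]
which, for $i\in\{1,\ldots,d-1\}$, can be evaluated at $\zeta_d^i\neq 1$ (this is the only subtle point: $H_S$ must be interpreted as the rational function $P_S(x)/(1-x)$, not as its divergent power series). Substituting this identity produces
\[ g(S/d)=\frac{1}{d}\Bigl[g(S)+\sum_{i=1}^{d-1}\frac{1}{1-\zeta_d^i}-\sum_{i=1}^{d-1}H_S(\zeta_d^i)\Bigr]. \]

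To finish, I need the classical identity $\sum_{i=1}^{d-1}\frac{1}{1-\zeta_d^i}=\frac{d-1}{2}$. I would prove this by taking the logarithmic derivative of $p(x)=\prod_{i=1}^{d-1}(x-\zeta_d^i)=1+x+\cdots+x^{d-1}$, giving $p'(x)/p(x)=\sum_{i=1}^{d-1}(x-\zeta_d^i)^{-1}$, and evaluating at $x=1$ where $p(1)=d$ and $p'(1)=\binom{d}{2}$. Plugging this in yields the claimed formula. The main obstacle is really just the bookkeeping around treating $H_S(\zeta_d^i)$ as a value of a rational function rather than a formal series; once that is in place, the proof is a direct roots-of-unity filter followed by a standard cyclotomic identity.
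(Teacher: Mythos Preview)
Your proof is correct and follows essentially the same approach as the paper: both recognize $g(S/d)$ as the number of gaps of $S$ divisible by $d$, extract this via orthogonality of $d$-th roots of unity (the paper phrases it as summing the $d$ equations $P_S(\zeta_d^j)=1-(1-\zeta_d^j)\sum_i\zeta_d^{ij}A_i$, which is exactly your roots-of-unity filter), and finish with the identity $\sum_{i=1}^{d-1}(1-\zeta_d^i)^{-1}=(d-1)/2$. The only cosmetic differences are that you work directly with the gap polynomial $H_G$ rather than via $P_S$, and that you prove the cyclotomic sum by a logarithmic-derivative argument whereas the paper uses the pairing $\frac{1}{1-\zeta_d^k}+\frac{1}{1-\zeta_d^{d-k}}=1$.
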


Recall that $q(x)=c_d(x)x^k+c_{d-1}(x)x^{k-1}+\cdots+c_0(x)$ is a quasipolynomial of degree $k$ in the variable $x$ if the coefficients $c_i(x)$ are periodic functions of $x$. Strazzanti proved that $F\left(\frac{\langle a, b \rangle}{2}\right)$ and $F\left(\frac{\langle a, a+1 \rangle}{5}\right)$ are quasipolynomials in $a$ of degree $2$ (with periods $2$ and $5$, respectively) \cite{Stra}. By specializing Theorem \ref{main} to the case $S=\langle a, a+k\rangle$, where $k$ is a fixed positive integer that is coprime to $a$, we show that the genus of $S/d$ is a quasipolynomial in $a$ of degree $2$ (see Corollary \ref{corofmain}). This result generalizes the formulas for $g({\langle a, b \rangle}/2)$ (when $\gcd(a,b)=1$) given in \cite{Ze} to formulas for $g({\langle a, b \rangle}/d)$ for all $d\geq 2$. The latter result answers an open problem listed in \cite{del}.

We prove Theorem \ref{main} in the next section.
Section \ref{sec:arithmetic} contains some results on invariants of quotients of numerical semigroups generated by certain arithmetic progressions, and we end the section by providing an open problem for the future study.

\section{The genus of a quotient of a numerical semigroup }\label{sec:generalresult}

We begin by proving Theorem \ref{main}. Fix a positive integer $d$, and let $A_i=|\{s\in\N\setminus S:s \equiv i \pmod d \}|$. In particular, $A_0=g(S/d)$.
Letting $\zeta_d$ denote a primitive $d^\text{th}$ root of unity, we have
\begin{align*}
P_S(\zeta_d) &= 1-(1-\zeta_d)(A_0+\zeta_d A_1+{\zeta_d}^2A_2+\cdots+{\zeta_d}^{d-1}A_{d-1})\\
P_S({\zeta_d}^2) &= 1-(1-{\zeta_d}^2)(A_0+{\zeta_d}^2 A_1+{\zeta_d}^4A_2+\cdots+{\zeta_d}^{2(d-1)}A_{d-1})\\
 &\;\;\vdots \\
P_S({\zeta_d}^{d-1}) &= 1-(1-{\zeta_d}^{d-1})(A_0+{\zeta_d}^{d-1} A_1+{\zeta_d}^{2(d-1)}A_2+\cdots+{\zeta_d}^{(d-1)^2}A_{d-1}).
\end{align*}
We also have the identity $g(S)=A_0+A_1+\cdots+A_{d-1}$.
By rewriting the above equations, we get
\begin{align*}
 A_0+\zeta_d A_1+{\zeta_d}^2A_2+\cdots+{\zeta_d}^{d-1}A_{d-1} &= \frac{1-P_S(\zeta_d)}{1-\zeta_d}\\
A_0+\zeta_d^2 A_1+{\zeta_d}^4A_2+\cdots+{\zeta_d}^{2(d-1)}A_{d-1} &= \frac{1-P_S(\zeta_d^2)}{1-\zeta_d^2}\\
 &\;\;\vdots \\
A_0+\zeta_d^{d-1} A_1+{\zeta_d}^{2(d-1)}A_2+\cdots+{\zeta_d}^{(d-1)^2}A_{d-1} &= \frac{1-P_S(\zeta_d^{d-1})}{1-\zeta_d^{d-1}}\\
A_0+A_1+\cdots +A_{d-1}&=g(S).
\end{align*}
Adding all of these together and using the fact that $1+\zeta_d+\zeta_d^2+\cdots+\zeta_d^{d-1}=0$ gives
	\begin{equation}\label{Eq2}
	A_0=\frac{1}{d}\left(g(S)+\sum\limits_{n=1}^{d-1}\frac{1-P_S({\zeta_d}^n)}{1-{\zeta_d}^n}\right)=\frac{1}{d}\left(g(S)+\sum\limits_{n=1}^{d-1}\frac{1}{1-{\zeta_d}^n}-\sum\limits_{n=1}^{d-1}H_S({\zeta_d}^n)\right).
	\end{equation}
	
The desired result now follows from the identity 
\begin{equation}\label{Eq1}
\sum\limits_{n=1}^{d-1}\frac{1}{1-{\zeta_d}^n}=\frac{d-1}{2}.
\end{equation}
To prove \eqref{Eq1}, note that for $1 \leq k \leq d-1$, we have
\[\frac{1}{1-{\zeta_d}^k}+\frac{1}{1-{\zeta_d}^{d-k}}=\frac{1}{1-{\zeta_d}^k}+\frac{1}{1-{\zeta_d}^{-k}}=1.\]
This immediately implies \eqref{Eq1} when $d$ is odd.
If $d$ is even, then \[\sum_{n=1}^{d-1}\frac{1}{1-\zeta_d^n}=\frac{d-2}{2}+\frac{1}{1-{\zeta_d}^{d/2}}=\frac{d-1}{2}\] because ${\zeta_d}^{d/2}=-1$. This completes the proof of Theorem \ref{main}.

\begin{corollary}\label{cor:S1}
Let $a,b,$ and $d$ be relatively prime positive integers, and let $a^*$ be the unique integer satisfying $aa^*\equiv 1\pmod d$ and $1 \leq a^* \leq d-1$. We have \[
g\left(\langle a,b \rangle/d \right)=\frac{(a-1)(b+d-a^*ab)}{2d}+\frac{1}{2}\left \lfloor \frac{a-1}{d}\right \rfloor\left(a^*b\left \lfloor \frac{a-1}{d}\right \rfloor+a^*b-2\right)+\sum\limits_{\substack{j=1\\ d \nmid j }}^{a-1}\left\lfloor \frac{a^*bj}{d}\right\rfloor.
\]
\end{corollary}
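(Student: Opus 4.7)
The plan is to apply Theorem \ref{main} to $S=\langle a,b\rangle$, substituting Sylvester's value $g(S)=(a-1)(b-1)/2$ from Theorem \ref{SylvesterTheorem}, so the remaining task is to evaluate $\Sigma:=\sum_{i=1}^{d-1}H_S(\zeta_d^i)$ in closed form. The hypothesis that $a,b,d$ are pairwise coprime ensures $\zeta_d^{ai},\zeta_d^{bi}\neq 1$ for $1\le i\le d-1$ and that $a^*$ is well-defined.

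First I would use $1-x^{ab}=(1-x^b)\sum_{k=0}^{a-1}x^{kb}$ to rewrite $H_S(x)=\frac{1}{1-x^a}\sum_{k=0}^{a-1}x^{kb}$, swap the order of summation in $\Sigma$, and apply the bijection $j\equiv ai\pmod d$ of $\{1,\ldots,d-1\}$ (so $i\equiv a^*j\pmod d$) to the inner sum. Setting $e_k:=(ka^*b)\bmod d$, this yields
\[\Sigma=\sum_{k=0}^{a-1}\sum_{j=1}^{d-1}\frac{\zeta_d^{e_k j}}{1-\zeta_d^j}.\]

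The heart of the argument is the identity
\[\sum_{j=1}^{d-1}\frac{\zeta_d^{ej}}{1-\zeta_d^j}=\begin{cases}\dfrac{d-1}{2},&e=0,\\[4pt] e-\dfrac{d+1}{2},&1\le e\le d-1,\end{cases}\]
which I would prove by induction on $e$, using the telescoping relation $(\zeta_d^{ej}-\zeta_d^{(e-1)j})/(1-\zeta_d^j)=-\zeta_d^{(e-1)j}$ together with $\sum_{j=1}^{d-1}\zeta_d^{mj}=-1$ for $d\nmid m$; the base case $e=0$ is \eqref{Eq1}. Because $\gcd(a^*b,d)=1$, we have $e_k=0$ if and only if $d\mid k$; writing $Q:=\lfloor(a-1)/d\rfloor$ and splitting the outer sum accordingly, I would use $e_k=ka^*b-d\lfloor ka^*b/d\rfloor$ to express $\Sigma$ in closed form in terms of $Q$, the sum $\sum_{d\mid k,\,1\le k\le a-1}\lfloor ka^*b/d\rfloor=a^*bQ(Q+1)/2$, and the sum $\sum_{d\nmid k,\,1\le k\le a-1}\lfloor ka^*b/d\rfloor$.

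The main obstacle is the algebraic bookkeeping after substituting $\Sigma$ back into Theorem \ref{main}: the resulting expression must be reorganized to match the three specific terms in the stated formula. The key simplification I would use is $aa^*=1+dn$ for some $n\in\N$, so that $a^*ab=b+dnb$; this allows the large contribution $a^*b\cdot a(a-1)/2$ (coming from $\sum_{k=0}^{a-1}e_k$) to combine with the $(a-1)(b-1)/2$ from Sylvester and with the $(d-1)/2,(d+1)/2$ prefactors to yield the numerator $(a-1)(b+d-a^*ab)$ of the first term. The contribution $a^*bQ(Q+1)/2$ from the $d\mid k$ portion, together with the leftover linear-in-$Q$ corrections, will assemble into the second term $\frac{1}{2}Q(a^*bQ+a^*b-2)$, leaving precisely $\sum_{d\nmid j,\,1\le j\le a-1}\lfloor a^*bj/d\rfloor$ as the final term.
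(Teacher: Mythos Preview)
Your proposal is correct and follows essentially the same route as the paper: factor $H_S(x)=\frac{1-x^{ab}}{(1-x^a)(1-x^b)}$ as a geometric series, reindex via $i\mapsto a^*i$ to reduce the denominator to $1-\zeta_d^j$, evaluate the resulting inner sum $\sum_{j=1}^{d-1}\zeta_d^{ej}/(1-\zeta_d^j)$, and do the bookkeeping with $\lfloor a^*bj/d\rfloor$. The only cosmetic difference is that you package the inner-sum evaluation as a separate lemma proved by telescoping induction, whereas the paper performs the same computation inline by writing $\zeta_d^{a^*bji}=(\zeta_d^{a^*bji}-1)+1$ and expanding $(\zeta_d^{a^*bji}-1)/(1-\zeta_d^i)$ as a finite geometric series.
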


\begin{proof}
Let $S=\langle a, b \rangle$. It is straightforward to check that $H_S(x)=\frac{1-x^{ab}}{(1-x^a)(1-x^b)}$. Now,
{
\allowdisplaybreaks
\begin{align*}
\sum\limits_{i=1}^{d-1}H_S({\zeta_d}^i)&=\sum\limits_{i=1}^{d-1}\frac{1+{\zeta_d}^{bi}+{\zeta_d}^{2bi}+\cdots+{\zeta_d}^{(a-1)bi}}{1-{\zeta_d}^{ai}}\\
&=\sum\limits_{i=1}^{d-1}\sum\limits_{j=0}^{a-1}\frac{{\zeta_d}^{bji}}{1-{\zeta_d}^{ai}}\\
&=\sum\limits_{i=1}^{d-1}\sum\limits_{j=0}^{a-1}\frac{{\zeta_d}^{a^*bji}}{1-{\zeta_d}^{i}}\\
&=\sum\limits_{i=1}^{d-1}\frac{1}{1-{\zeta_d}^i}+\sum\limits_{i=1}^{d-1}\sum\limits_{j=1}^{a-1}\frac{{\zeta_d}^{a^*bji}}{1-{\zeta_d}^{i}}\\
&=\frac{d-1}{2}+\sum\limits_{i=1}^{d-1}\sum\limits_{j=1}^{a-1}\frac{{\zeta_d}^{a^*bji}-1+1}{1-{\zeta_d}^{i}}\\
&=\frac{d-1}{2}+\sum\limits_{i=1}^{d-1}\sum\limits_{j=1}^{a-1}\left(-(1+\zeta_d^i+\zeta_d^{2i}+\cdots+\zeta_d^{(a^*bj-1)i})+\frac{1}{1-\zeta_d^i}\right)\\
&=\frac{a(d-1)}{2}-\sum\limits_{j=1}^{a-1}\sum\limits_{i=1}^{d-1}\left(1+\zeta_d^i+\zeta_d^{2i}+\cdots+\zeta_d^{(a^*bj-1)i}\right)\\
&= \frac{a(d-1)}{2}-\sum\limits_{\substack{j=1\\d \nmid j}}^{a-1}\sum\limits_{i=1}^{d-1}\left(\zeta_d^{\left\lfloor \frac{a^*bj}{d}\right\rfloor di}+\cdots+\zeta_d^{(a^*bj-1)i}\right)\\
&= \frac{a(d-1)}{2}-\sum\limits_{\substack{j=1\\d \nmid j}}^{a-1}\left\{(d-1)+(-1)\left(a^*bj-1-\left\lfloor \frac{a^*bj}{d}\right\rfloor d\right)\right\}
\end{align*}
}
The third equation comes from by setting new $\zeta_d$ as $\zeta_d^a$. Note that we have made use of the identity in \eqref{Eq1} and the identity $1+\zeta_d^i+\zeta_d^{2i}+\cdots+\zeta_d^{(d-1)i}=0$ for $1 \leq i \leq d-1$. The desired result is now an immediate consequence of Theorem \ref{main}. 
\end{proof}

We now specialize to numerical semigroups of embedding dimension $2$.

\begin{theorem}\label{d=2}
If $a,b,d$ are relatively prime positive integers, then
$$g\left(\langle a,b \rangle/d\right)=\frac{1}{2d}(a-1)(b-1)+C_{a,b,d},$$ where $C_{a,b,d}$ is a constant dependent only on $d$ and the residue classes of $a$ and $b$ modulo $d$.
\end{theorem}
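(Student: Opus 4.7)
The plan is to apply Theorem \ref{main} directly to $S = \langle a, b\rangle$ and observe that the correction to the leading term $(a-1)(b-1)/(2d)$ depends on $a,b$ only through their residues modulo $d$. By Sylvester's Theorem \ref{SylvesterTheorem}, $g(\langle a,b\rangle) = (a-1)(b-1)/2$, so Theorem \ref{main} yields
\begin{equation*}
g(\langle a,b\rangle/d) \;-\; \frac{(a-1)(b-1)}{2d} \;=\; \frac{1}{d}\left(\frac{d-1}{2} \;-\; \sum_{i=1}^{d-1} H_S(\zeta_d^i)\right).
\end{equation*}
It therefore suffices to prove that $\sum_{i=1}^{d-1} H_S(\zeta_d^i)$ depends only on $d$, $a \bmod d$, and $b \bmod d$; in that case, $C_{a,b,d}$ may be defined to be the right-hand side above.

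Next, I would substitute the classical product formula $H_S(x) = (1-x^{ab})/((1-x^a)(1-x^b))$, which is already used in Corollary \ref{cor:S1} and which is valid because $\gcd(a,b)=1$. Evaluating at $x = \zeta_d^i$ gives
\begin{equation*}
H_S(\zeta_d^i) \;=\; \frac{1 - \zeta_d^{abi}}{(1-\zeta_d^{ai})(1-\zeta_d^{bi})}
\end{equation*}
for each $1 \le i \le d-1$. Every exponent of $\zeta_d$ appearing on the right is well-defined modulo $d$, so replacing $a$ by any integer congruent to it modulo $d$ (and similarly for $b$) does not change $H_S(\zeta_d^i)$: the products $ai$, $bi$, $abi$ are altered only by multiples of $d$. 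Summing over $i$, the quantity $\sum_{i=1}^{d-1} H_S(\zeta_d^i)$ is a function of $d$, $a \bmod d$, and $b \bmod d$ only, which is exactly what we need.

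The one subtlety to verify is that the denominators $1-\zeta_d^{ai}$ and $1-\zeta_d^{bi}$ are nonzero for $1\leq i\leq d-1$; this is equivalent to $\gcd(a,d) = \gcd(b,d) = 1$ and is guaranteed by the pairwise-coprimality built into the hypothesis. Because the argument is just a short combination of Theorem \ref{main}, Sylvester's theorem, and the closed form for $H_{\langle a,b\rangle}$, there is no real obstacle — the theorem falls out essentially at once once Theorem \ref{main} has been rearranged into the displayed form above.
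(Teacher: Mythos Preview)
Your proposal is correct and follows essentially the same route as the paper: combine Theorem \ref{main} (the paper actually cites the equivalent intermediate form \eqref{Eq2} with $P_S$ in place of $H_S$) with Sylvester's formula $g(\langle a,b\rangle)=(a-1)(b-1)/2$, and then note that the residual sum depends on $a,b$ only through their residues modulo $d$ via the closed form $H_S(x)=(1-x^{ab})/((1-x^a)(1-x^b))$. Your write-up is in fact more explicit than the paper's about \emph{why} the correction term depends only on the residue classes and about the nonvanishing of the denominators.
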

\begin{proof}
Let $S=\langle a,b\rangle$. Sylvester's theorem (Theorem \ref{SylvesterTheorem}) tells us that $g(S)=(a-1)(b-1)/2$. Invoking \eqref{Eq2} yields
\[g\left(S/d\right)=A_0=\frac{1}{d}\left(g(S)+\sum_{j=1}^{d-1}\frac{1-P_S(\zeta_d^j)}{1-\zeta_d^j}\right)=\frac{1}{2d}(a-1)(b-1)+\frac{1}{d}\sum_{j=1}^{d-1}\frac{1-P_S(\zeta_d^j)}{1-\zeta_d^j}.\qedhere\]
\end{proof}

As mentioned in the introduction, the following corollary generalizes a result of Strazzanti \cite{Stra}.  

\begin{corollary}\label{corofmain}
Fix a positive integer $d$. For relatively prime positive integers $a$ and $k$, the genus $g\left(\langle a,a+k\rangle/d\right)$ is a quasipolynomial in $a$ of degree $2$ with leading coefficient $\frac{1}{2d}$.
\end{corollary}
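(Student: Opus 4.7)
The plan is to deduce the corollary as an essentially immediate consequence of Theorem \ref{d=2}, with the only real work being to check that the hypotheses translate correctly and that the residue data collapses to a single modular class of $a$.

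First I would verify the coprimality setup. Since $\gcd(a,k)=1$, we have $\gcd(a,a+k)=\gcd(a,k)=1$, so $\langle a,a+k\rangle$ is indeed a numerical semigroup and $\gcd(a,a+k,d)=1$, meaning the hypotheses of Theorem \ref{d=2} are satisfied with $b = a+k$. Applying that theorem directly gives
\begin{equation*}
g\bigl(\langle a,a+k\rangle/d\bigr)=\frac{1}{2d}(a-1)(a+k-1)+C_{a,a+k,d}.
\end{equation*}

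Next I would unpack each term. Expanding the first summand yields $\frac{1}{2d}\bigl(a^{2}+(k-2)a-(k-1)\bigr)$, which is a genuine polynomial in $a$ of degree $2$ with leading coefficient $\frac{1}{2d}$ (here $k$ and $d$ are fixed). For the error term, Theorem \ref{d=2} guarantees that $C_{a,a+k,d}$ depends only on $d$ and on the residue classes of $a$ and of $a+k$ modulo $d$. But the residue of $a+k$ modulo $d$ is determined by the residue of $a$ modulo $d$ together with the fixed value $k$, so $C_{a,a+k,d}$ is a function of $a \bmod d$ alone (for fixed $k$ and $d$). In other words, the map $a \mapsto C_{a,a+k,d}$ is periodic in $a$ with period dividing $d$.

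Putting these two pieces together, $g(\langle a,a+k\rangle/d)$ is the sum of a degree-$2$ polynomial in $a$ with leading coefficient $\frac{1}{2d}$ and a periodic function of $a$ of period dividing $d$; this is precisely a quasipolynomial of degree $2$ in $a$ with the stated leading coefficient, which completes the proof. I do not anticipate any real obstacle here: the heavy lifting was already done inside Theorem \ref{d=2}, and the only subtlety to note is that specializing $b=a+k$ ties the two residues together so that the periodic correction truly depends on a single modular variable.
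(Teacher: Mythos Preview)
Your proposal is correct and matches the paper's approach: the paper states the corollary immediately after Theorem~\ref{d=2} with no separate proof, treating it as the direct specialization $b=a+k$ that you spell out. Your observation that the residue of $a+k$ modulo $d$ is determined by that of $a$ (so $C_{a,a+k,d}$ becomes periodic in $a$ alone) is exactly the missing sentence the paper leaves implicit.
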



%
%
%
%
%
%
%

\section{Numerical Semigroups Generated by Arithmetic Progressions}\label{sec:arithmetic}

Two well-studied types of numerical semigroups are those of the form $\langle a,a+k,a+2k\rangle$ and $\langle a,a+k,\ldots,a+(a-1)k\rangle$, where $a$ and $k$ are relatively prime positive integers. Note that the latter numerical semigroup is, in fact, the numerical semigroup generated by the terms of the infinite arithmetic progression $a,a+k,a+2k,\ldots$. We will make use of the following special case of \cite[Corollary 3.2]{Num}.

\begin{prop}[\cite{Num}]\label{Num}
If $a$ and $k$ are relatively prime positive integers, then the numerical semigroup $\langle a,a+k,a+2k \rangle$ is symmetric if and only if $a$ is even.
\end{prop}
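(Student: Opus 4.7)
My plan is to compute the Frobenius number and genus of $S=\langle a,a+k,a+2k\rangle$ via its Apéry set with respect to $a$, and then apply the criterion that a numerical semigroup $S$ is symmetric if and only if $2g(S)=F(S)+1$.

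First I would characterize membership in $S$. Since every element of $S$ has the form $c_0a+c_1(a+k)+c_2(a+2k)=(c_0+c_1+c_2)a+(c_1+2c_2)k$, a positive integer $m\in S$ admits a representation $m=Na+Jk$ with $N\geq 1$ and $0\leq J\leq 2N$. Using $\gcd(a,k)=1$, every positive integer has a unique \emph{standard} representation $m=na+jk$ with $0\leq j\leq a-1$; I would show that $m\in S$ precisely when this standard representation satisfies $n\geq \max(1,\lceil j/2\rceil)$. The ``if'' direction is immediate by exhibiting $(c_0,c_1,c_2)=(n-\lceil j/2\rceil,\,j\bmod 2,\,\lfloor j/2\rfloor)$; the ``only if'' direction amounts to showing that reducing a non-standard representation $(N,J)$ with $J\geq a$ to standard form by shifts $(N,J)\mapsto(N+k,J-a)$ preserves the relevant inequality, which follows from $2N\geq J$ and $a,k\geq 1$.

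The second step extracts the Apéry set. As $j$ ranges over $\{1,\dots,a-1\}$, the residues $jk\bmod a$ biject with $\{1,\dots,a-1\}$, so the smallest element of $S$ in each nonzero residue class mod $a$ is $\lceil j/2\rceil a+jk$. Hence
\[
\Ap(S,a)=\{0\}\cup\{\lceil j/2\rceil a+jk:1\leq j\leq a-1\}.
\]
The map $j\mapsto \lceil j/2\rceil a+jk$ is strictly increasing in $j$, so the standard formulas $F(S)=\max\Ap(S,a)-a$ and $g(S)=\tfrac{1}{a}\sum_{w\in \Ap(S,a)}w-\tfrac{a-1}{2}$ yield $F(S)=\lceil(a-1)/2\rceil a+(a-1)k-a$ and $g(S)=\sum_{j=1}^{a-1}\lceil j/2\rceil+(k-1)(a-1)/2$. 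The remaining sum evaluates to $a^2/4$ when $a$ is even and $(a^2-1)/4$ when $a$ is odd.

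Finally I would substitute into the criterion $2g(S)=F(S)+1$. When $a$ is even, a short algebraic check confirms the equality, so $S$ is symmetric. When $a\geq 3$ is odd, the analogous computation gives $2g(S)-F(S)-1=(a-1)/2>0$, so $S$ is not symmetric. (The degenerate case $a=1$ reduces $S$ to $\N$, which lies outside the embedding-dimension-three setting.) I expect the main obstacle to be the careful verification of the membership characterization in the first step, i.e., ruling out any non-standard decomposition that could produce an element of $S$ whose standard form violates $n\geq \lceil j/2\rceil$; everything after that is routine bookkeeping by parity.
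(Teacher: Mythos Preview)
The paper does not prove this proposition at all; it is quoted from \cite{Num} as a special case of Corollary~3.2 there. Your argument, by contrast, is a complete self-contained proof and it is correct. The membership criterion (the standard form $m=na+jk$ with $0\le j\le a-1$ lies in $S$ iff $n\ge\max(1,\lceil j/2\rceil)$) is right, and your reduction argument for the ``only if'' direction works because a single shift $(N,J)\mapsto(N+k,J-a)$ turns $2N\ge J$ into $2(N+k)\ge J-a$, which is even weaker. From this the Ap\'ery set $\Ap(S,a)=\{0\}\cup\{\lceil j/2\rceil a+jk:1\le j\le a-1\}$ follows, and the parity computation of $2g(S)-F(S)-1$ (giving $0$ for $a$ even and $(a-1)/2$ for $a$ odd) is accurate. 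This Ap\'ery set is, incidentally, exactly the one the paper writes down without derivation in its later proof computing $F(S/d)$ and $g(S/d)$ for odd $a$, so your approach dovetails with what the paper uses afterwards. Your remark about $a=1$ flags a genuine boundary issue with the statement as phrased (since $\langle 1,1+k,1+2k\rangle=\N$ is symmetric), not a gap in your proof.
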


\begin{theorem}\label{even}
Let $d\geq 3$ be an integer. Let $a$ and $k$ be relatively prime positive integers. If $a=sd$ is a positive even multiple of $d$, then $\langle a, a+k, a+2k\rangle/d$ is symmetric. 
\end{theorem}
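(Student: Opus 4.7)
Since $a$ is even, Proposition~\ref{Num} gives that $S=\langle a,a+k,a+2k\rangle$ is symmetric, so in particular $n\in S\iff F(S)-n\notin S$ for $n\in[0,F(S)]$. Write $F=F(S)$ and $T=S/d$. The plan is to verify symmetry of $T$ directly from the definition by combining this property of $S$ with an Apéry-set analysis to handle the one nontrivial implication.

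Strazzanti's theorem cited in the introduction applies (symmetric implies $d$-symmetric), so $F(T)=(F-x)/d$, where $x$ is the smallest positive element of $S$ with $x\equiv F\pmod d$. Checking that $T$ is symmetric amounts to showing $n\in T\iff F(T)-n\notin T$ for every $n\in[0,F(T)]$. Since $n\in T\iff dn\in S$, setting $y=F-dn$ (which lies in $[x,F]$ and satisfies $y\equiv F\pmod d$), the symmetry of $S$ converts this into the assertion that $y\in S\iff y-x\in S$. The direction $y-x\in S\Rightarrow y\in S$ is immediate from $x\in S$, so the whole problem reduces to proving the claim that every $y\in S$ with $y\equiv F\pmod d$ satisfies $y-x\in S$.

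To establish the claim, I would use $\Ap(S,a)$. A direct minimization over the representation $c_1a+c_2(a+k)+c_3(a+2k)$ (using $\gcd(a,k)=1$) identifies the Apéry element in residue class $jk\pmod a$ as $w_j=\lceil j/2\rceil a+jk$ for $0\le j\le a-1$. Since $d\mid a$ and $F\equiv -k\pmod d$, the elements of $S$ congruent to $F$ modulo $d$ are exactly those lying in residue classes indexed by $j=td+(d-1)$ for $t\in\{0,1,\ldots,s-1\}$; in particular, $x=w_{d-1}$. Writing any such $y$ via the Apéry decomposition as $y=w_j+ca$ with $c\ge 0$, the claim reduces to showing that $w_j-w_{d-1}\in S$ for every admissible $j$.

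This final verification is the one real computation and is the main obstacle. The ceiling in $w_j$ forces a short case split on the parities of $d$ and $t$: when $d$ is even, or when $d$ is odd and $t$ is even, one obtains $w_j-w_{d-1}=(td/2)(a+2k)$; when $d$ is odd and $t$ is odd, one obtains $w_j-w_{d-1}=\tfrac{td-1}{2}(a+2k)+(a+k)$. In every case the coefficients are nonnegative integers, so $w_j-w_{d-1}\in S$, and hence $y-x\in S$. Combining this with the symmetry of $S$ as set up above yields the symmetry of $S/d$.
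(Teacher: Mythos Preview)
Your proof is correct and takes a genuinely different route from the paper's argument. The paper proceeds by explicitly computing a generating set for $S/d$: it lists all pairs $(y,z)\in\{0,\ldots,d-1\}^2$ with $d\mid y+2z$, evaluates $\frac{(a+k)y+(a+2k)z}{d}$, and shows that $S/d=\langle s,\,s+k+ts,\,s+2k+2ts\rangle$ when $d=2t+1$ is odd (a three-term arithmetic progression with even first term $s$, symmetric by Proposition~\ref{Num}) and $S/d=\langle s,\,k+st\rangle$ when $d=2t$ is even (embedding dimension $2$, hence symmetric). Your argument instead leverages the symmetry of $S$ directly, using Strazzanti's theorem to identify $F(S/d)$ and an Ap\'ery-set description $w_j=\lceil j/2\rceil a+jk$ to reduce everything to the single verification $w_j-w_{d-1}\in S$ for $j\equiv -1\pmod d$; your parity case split then matches the paper's odd/even dichotomy for $d$. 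The paper's approach has the advantage of yielding explicit generators for $S/d$, which are reused in the subsequent corollary and theorem to compute $F(S/d)$ and $g(S/d)$; your approach is more conceptual and would generalize more readily to other situations where one wants to transfer symmetry along a quotient without computing generators.
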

\begin{proof}
Let $S=\langle a,a+k,a+2k\rangle$. We know that $\gcd(d,k)=1$ because $a$ and $k$ are relatively prime. An element of $S$ has the form $ax+(a+k)y+(a+2k)z$ for nonnegative integers $x,y,z$. In order for this element to be a multiple of $d$, we need $d\mid(y+2z)$. It follows that $S/d$ is generated by $s,a+k,a+2k$ and all integers of the form $\frac{(a+k)y+(a+2k)z}{d}$ such that $d\mid(y+2z)$ and $y,z\in\{0,1,\ldots,d-1\}$.

First, assume $d$ is odd, say $d=2t+1$. By hypothesis, $s$ is even. The pairs $(y,z)$ such that $d\mid(y+2z)$ and $y,z\in\{0,1,\ldots,d-1\}$ are \[(d-2,1),(d-4,2),\ldots,(1,t),(d-1,t+1),\ldots,(2,2t).\] The corresponding values of $\frac{(a+k)y+(a+2k)z}{d}$ are \[a+k-s,a+k-2s,\ldots,a+k-ts,a+2k+ts,\ldots,a+2k+s.\] As a result, $S/d=\langle s,a+k-ts,a+2k\rangle$. But $a=sd=(2t+1)s$, so $S/d=\langle s,s+k+ts,s+2k+2ts\rangle$. By Proposition~\ref{Num}, $S/d$ is symmetric. 

Next, assume $d$ is even, say $d=2t$. The pairs $(y,z)$ such that $d\mid(y+2z)$ and $y,z\in\{0,1,\ldots,d-1\}$ are $(d-2,1),(d-4,2),\ldots,(0,t),(d-2,t+1),\ldots,(2,2t-1)$. The corresponding values of $\frac{(a+k)y+(a+2k)z}{d}$ are \[a+k-s,a+k-2s,\ldots,a+k-ts,a+2k+ts,\ldots,a+2k+s.\] It follows that $S/d=\langle s,a+k-st,a+2k\rangle=\langle s,k+st,2k+2st\rangle=\langle s,k+st\rangle$. It is well-known that every numerical semigroup with embedding dimension $2$ is symmetric, so the proof is complete.  
\end{proof}

\begin{corollary}
Let $d\geq 4$ be an even integer. Let $a$ and $k$ be relatively prime positive integers. If $a=sd$, then $F(\langle a, a+k, a+2k\rangle/d)=(s-1)(a+2k)/2-s$ and $g(\langle a, a+k, a+2k\rangle/d)=(s-1)(a+2k-2)/4$.
\end{corollary}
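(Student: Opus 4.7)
My plan is to leverage the explicit description of $S/d$ that is already produced inside the proof of Theorem~\ref{even}. There, for even $d=2t$ (which, together with $d\geq 4$, forces $t\geq 2$), it is shown that $S/d=\langle s,\,k+st\rangle$, a numerical semigroup of embedding dimension $2$. So the strategy is simply: apply Sylvester's theorem (Theorem~\ref{SylvesterTheorem}) to this two-generator quotient, and then rewrite the resulting expressions in terms of $a$ and $k$ using the identity $a+2k=2(k+st)$.

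First I would verify the coprimality needed to apply Sylvester's theorem: because $a=sd$, any common divisor of $s$ and $k$ divides $a$ and $k$, and hence divides $\gcd(a,k)=1$; thus $\gcd(s,k+st)=\gcd(s,k)=1$. By Theorem~\ref{SylvesterTheorem},
\[
F(S/d)=s(k+st)-s-(k+st)=(s-1)(k+st)-s
\]
and
\[
g(S/d)=\frac{(s-1)\bigl((k+st)-1\bigr)}{2}.
\]

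The final step is the substitution $k+st=(a+2k)/2$, which follows immediately from $a=sd=2st$. Plugging this in converts the first expression to $(s-1)(a+2k)/2-s$ and the second to $(s-1)(a+2k-2)/4$, matching the statement.

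I do not anticipate any serious obstacle: the work has already been done in the proof of Theorem~\ref{even}, which reduces $S/d$ to an embedding-dimension-$2$ semigroup in the even case; the corollary is essentially a routine rewriting of Sylvester's formulas. The only point requiring attention is flagging why $d\geq 4$ (rather than $d\geq 3$) is imposed — namely, Theorem~\ref{even} requires $d\geq 3$ and the even hypothesis then forces $d\geq 4$, and moreover we need $t\geq 2$ so that $k+st$ is genuinely larger than $s$ and the expression $\langle s,k+st\rangle$ is a valid minimal presentation.
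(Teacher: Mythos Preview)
Your proposal is correct and follows essentially the same approach as the paper: invoke the description $S/d=\langle s,k+st\rangle$ from the proof of Theorem~\ref{even} and apply Sylvester's formulas, then rewrite via $k+st=(a+2k)/2$. One small quibble: your side remark that $t\geq 2$ is needed ``so that $k+st$ is genuinely larger than $s$'' is not the right justification (that inequality holds for any $t\geq 1$); the hypothesis $d\geq 4$ comes simply from combining $d\geq 3$ in Theorem~\ref{even} with evenness.
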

\begin{proof}
Write $d=2t$. We know from the proof of Theorem \ref{even} that $\langle a, a+k, a+2k\rangle/d = \langle s, k+st \rangle$. In general, it is known that $F(\langle x, y \rangle)=xy-x-y$ and $g(\langle x,y \rangle)=(x-1)(y-1)/2$. Setting $x=s$ and $y=k+st$ gives the desired result. 
\end{proof}

Let $n$ be a nonzero element of a numerical semigroup $S$. The \emph{Ap\'{e}ry set of $n$ in $S$} is the set
\[\Ap( S , n ) = \{ s \in S:s-n \notin S \} .\] The following fundamental results which we will need in the proof of Theorem \ref{cor:aoddmultipleofd}, are known as \emph{Selmer's formulas}. 

\begin{theorem} [\cite{assi}, Proposition 12] \label{selmer}
If $S$ is a numerical semigroup, then
\begin{enumerate}
\item $F(S)=\max(\Ap(S,m))-m$.
\item $g(S)=\frac{1}{m}\sum\limits_{w\in \Ap(S,m)}w-\frac{m-1}{2}$.
\end{enumerate}
\end{theorem}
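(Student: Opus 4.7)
The plan is to prove both formulas by first establishing a structural description of $\Ap(S,m)$: it contains exactly one element in each residue class modulo $m$, namely the smallest element of $S$ in that class. Since $S$ has finite complement in $\N$, every residue class modulo $m$ contains infinitely many elements of $S$, so $w_r := \min\{s \in S : s \equiv r \pmod{m}\}$ is well-defined for each $r \in \{0,1,\ldots,m-1\}$, with $w_0 = 0$. First I would check that $w_r \in \Ap(S,m)$ (since $w_r - m$ cannot lie in $S$ by minimality), and conversely that any $w \in \Ap(S,m)$ with $w \equiv r \pmod{m}$ must equal $w_r$; otherwise $w - m \geq w_r$ would be a strictly smaller element of $S$ in the same class, contradicting $w - m \notin S$. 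It is essential here that $m \in S$, so that adding or subtracting $m$ respects both residues and membership in $S$.

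For part (1), I would observe that the gaps of $S$ in residue class $r$ are precisely $r, r+m, r+2m, \ldots, w_r - m$. Hence the largest integer in $\N \setminus S$ is $\max_r (w_r - m) = \max(\Ap(S,m)) - m$, which is exactly $F(S)$.

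For part (2), I would count gaps class-by-class: the residue class $r$ contributes $(w_r - r)/m$ gaps to $\N \setminus S$. Summing over $r$ gives
\[g(S) = \sum_{r=0}^{m-1} \frac{w_r - r}{m} = \frac{1}{m}\sum_{w \in \Ap(S,m)} w \;-\; \frac{1}{m}\cdot\frac{m(m-1)}{2} = \frac{1}{m}\sum_{w \in \Ap(S,m)} w - \frac{m-1}{2}.\]

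The only substantive step is the characterization of $\Ap(S,m)$; once it is in hand, both formulas follow by routine bookkeeping. I expect the main (though mild) obstacle to be writing up the first step rigorously, in particular verifying uniqueness of the representative in each residue class. This hinges on nothing more than closure of $S$ under addition by $m$, so no deeper ideas should be required.
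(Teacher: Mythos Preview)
Your proof is correct and is in fact the standard argument for Selmer's formulas. Note, however, that the paper does not prove this theorem at all: it is quoted as Proposition~12 of \cite{assi} and invoked as a black box, so there is no in-paper proof to compare your approach against. One minor point of exposition: in the uniqueness step, when you assert that $w-m$ lies in $S$, the reason is that $w-m = w_r + jm$ for some $j\ge 0$ and $S$ is closed under addition of $m\in S$; your phrasing (``$w-m\ge w_r$ would be a strictly smaller element of $S$'') compresses this slightly, but the logic is sound.
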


\begin{theorem}
Let $S=\langle a,a+k,a+2k\rangle$, where $a$ and $k$ are relatively prime positive integers. Suppose $d$ is a positive divisor of $a$, say $a=sd$. If $a$ is odd, then 
\[F(S/d)=\left((s-1)t+\frac{s-1}{2}\right)s+(s-1)k-s,\] and 
\[
g(S/d)=\frac 12\left(s(s-1)t+\frac{s^2-1}{2}+(s-1)k-(s-1)\right).
\]
\end{theorem}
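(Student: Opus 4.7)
The plan is to leverage the generator computation from the proof of Theorem \ref{even}: since $a$ is odd and $d \mid a$, the divisor $d$ must be odd, so write $d = 2t+1$. The enumeration of pairs $(y,z) \in \{0,\ldots,d-1\}^2$ with $d \mid y+2z$ carried out there is purely algebraic and independent of the parity of $s = a/d$; it yields
\[
S/d \;=\; \langle s,\; a+k-ts,\; a+2k\rangle \;=\; \langle s,\; s+k',\; s+2k'\rangle, \qquad k' := k+ts,
\]
with $\gcd(s,k') = \gcd(s,k) = 1$. However, since $s$ is now odd, Proposition \ref{Num} tells us $S/d$ is \emph{not} symmetric, so I will compute $F(S/d)$ and $g(S/d)$ directly via Selmer's formulas (Theorem \ref{selmer}).

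The key computation is the Ap\'ery set $\Ap(S/d, s)$. A typical element of $S/d$ has the form $(\alpha+\beta+\gamma)s + (\beta+2\gamma)k'$ with $\alpha,\beta,\gamma \geq 0$, and reduces to $(\beta+2\gamma)k' \pmod{s}$. Since $\gcd(s,k') = 1$, as $j := (\beta + 2\gamma) \bmod s$ ranges over $\{0,\ldots,s-1\}$ we hit every residue class. For fixed $j$, minimizing the element forces $\alpha = 0$ and $\beta + 2\gamma = j$ (rather than $j+s, j+2s, \ldots$), and then maximizing $\gamma$ gives $\beta + \gamma = \lceil j/2 \rceil$. Thus
\[
\Ap(S/d,s) \;=\; \bigl\{\lceil j/2\rceil\,s + j k' \;:\; 0\le j\le s-1\bigr\}.
\]
Minimality in each residue class is the only slightly subtle step: one must check that replacing $\beta + 2\gamma = j$ by $j + ms$ with $m \geq 1$ never produces a smaller element, which is immediate because both $\lceil (j+ms)/2\rceil$ and $j+ms$ strictly increase with $m$.

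To finish, I apply Selmer's formulas. Since $s$ is odd, the maximum of $\lceil j/2\rceil s + jk'$ is attained at $j = s-1$, giving $F(S/d) = \tfrac{s-1}{2}\,s + (s-1)k' - s$; expanding $k' = k+ts$ yields the claimed formula. Using the identity $\sum_{j=0}^{s-1}\lceil j/2\rceil = (s^2-1)/4$ (valid for odd $s$) together with $\sum_{j=0}^{s-1} j = s(s-1)/2$, Selmer's genus formula gives
\[
g(S/d) \;=\; \frac{s^2 - 1}{4} + \frac{(s-1)k'}{2} - \frac{s-1}{2},
\]
which matches the stated expression after substituting $k' = k+ts$. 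Once the Ap\'ery set is established, both formulas follow mechanically; the main conceptual work is isolating the generators in Step~1 and the clean description of $\Ap(S/d, s)$ in Step~2.
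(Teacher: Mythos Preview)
Your proof is correct and follows essentially the same route as the paper: both use the generator computation from Theorem~\ref{even} to write $S/d=\langle s,\,s+k',\,s+2k'\rangle$ with $k'=k+ts$, determine $\Ap(S/d,s)$, and then apply Selmer's formulas. Your version is in fact more complete, since you supply a clean derivation of the Ap\'ery set $\{\lceil j/2\rceil\,s+jk':0\le j\le s-1\}$, whereas the paper simply lists its elements and asserts the conclusion.
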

\begin{proof}
From the proof of Theorem \ref{even}, we have $S/d=\langle s,s+k+ts,s+2k+2ts \rangle = \langle s,(t+1)s+k,(2t+1)s+2k \rangle$, where $d=2t+1$. Now, \[\Ap(S,s)=\{0,(t+1)s+k, (2t+1)s+2k, (3t+2)s+3k, (4t+2)s+4k, \ldots,\] \[ \left((s-2)t+\tfrac{s-1}{2}\right)s+(s-2)k ,\left((s-1)t+\tfrac{s-1}{2}\right)s+(s-1)k\}\] since $\gcd(s,k)=1$. The desired result is now immediate by Theorem \ref{selmer}. 
\end{proof}

\begin{corollary}\label{cor:aoddmultipleofd}
Let $S=\langle a,a+k,a+2k\rangle$, where $a$ and $k$ are relatively prime positive integers. Suppose $d$ is a positive divisor of $a$, say $a=sd$. If $a$ is odd, then $2g(S/d)-F(S/d)=\dfrac{s+1}{2}$.
\end{corollary}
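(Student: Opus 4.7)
The plan is to derive this corollary as an immediate algebraic consequence of the closed-form expressions for $F(S/d)$ and $g(S/d)$ established in the preceding theorem. Before invoking those formulas, I would first record the small but essential observation that since $a$ is odd and $d\mid a$, the divisor $d$ must itself be odd; this places us squarely in the setting $d=2t+1$ used in the previous theorem, so its formulas are applicable.

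Next, I would compute $2g(S/d)$ by doubling the given expression:
\[
2g(S/d)=s(s-1)t+\tfrac{s^2-1}{2}+(s-1)k-(s-1),
\]
and then subtract
\[
F(S/d)=s(s-1)t+\tfrac{s(s-1)}{2}+(s-1)k-s.
\]
The terms $s(s-1)t$ and $(s-1)k$ cancel outright between the two expressions, and what remains is
\[
\tfrac{s^2-1}{2}-\tfrac{s(s-1)}{2}-(s-1)+s = \tfrac{s-1}{2}+1 = \tfrac{s+1}{2},
\]
which is the claimed identity.

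There is no real obstacle to overcome: the corollary is just a bookkeeping consequence of the theorem it follows. The only point that warrants a brief sanity check is the parity observation above, ensuring that we may legitimately invoke the ``$a$ odd'' branch of the prior result. All other steps are routine algebraic simplification.
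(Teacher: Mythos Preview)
Your proposal is correct and is exactly what the paper intends: the corollary is stated without proof, immediately after the theorem giving closed forms for $F(S/d)$ and $g(S/d)$ when $a$ is odd, so it is meant to follow by the straightforward subtraction you carry out. Your parity remark (that $d\mid a$ with $a$ odd forces $d$ odd, hence $d=2t+1$) is a harmless sanity check already implicit in the theorem's hypotheses, and your algebra is clean and correct.
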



We now consider the numerical semigroup generated by the terms of an infinite arithmetic progression $a,a+k,a+2k,\ldots$, where $a$ and $k$ are relatively prime positive integers. Equivalently, this is the numerical semigroup \[S=\langle a,a+k,a+2k,\ldots,a+(a-1)k\rangle.\]  

\begin{prop} \label{quotient}
Let $S=\langle a,a+k,a+2k,\ldots,a+(a-1)k\rangle$, where $a$ and $k$ are relatively prime positive integers. Let $d$ be a positive divisor of $a$, say $a=sd$. We have $\displaystyle g(S/d)=\frac{F(S/d)+s-1}{2}$.
\end{prop}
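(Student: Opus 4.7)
My plan is to identify $\Ap(S/d,s)$ explicitly and then invoke Selmer's formulas (Theorem~\ref{selmer}), much in the spirit of the proof of the preceding theorem in this section.

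The first step is to parameterize $S$: every nonzero element of $S$ has the form $ma+nk$ with $m\geq 1$ and $0\leq n\leq (a-1)m$, since an arbitrary representation $\sum_{i=0}^{a-1}c_i(a+ik)$ yields $m=\sum_i c_i$ and $n=\sum_i ic_i$, and the constraint $0\leq n\leq (a-1)m$ is both necessary and sufficient for such a decomposition to exist.

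Next, I would translate this into a description of $S/d$. Since $\gcd(a,k)=1$ forces $\gcd(d,k)=1$, the relation $dx=ma+nk=msd+nk$ requires $d\mid n$; writing $n=dn'$ shows that $x\in (S/d)\setminus\{0\}$ if and only if $x=ms+n'k$ for some $m\geq 1$ and $0\leq n'\leq sm-\lceil m/d\rceil$. Using $\gcd(s,k)=1$, each nonzero residue class $r$ modulo $s$ contains a unique $j_r\in\{1,\ldots,s-1\}$ with $j_rk\equiv r\pmod{s}$, and the choice $(m,n')=(1,j_r)$ is legal (since $j_r\leq s-1$), producing the element $s+j_rk\in S/d$. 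No element of $S/d$ in the same residue class can be strictly smaller: decreasing $n'$ below $j_r$ violates the congruence, while reducing $m$ to $0$ forces $x=0$. Hence
\[\Ap(S/d,s)=\{0\}\cup\{s+jk:1\leq j\leq s-1\}.\]

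Selmer's formulas now yield $F(S/d)=(s-1)k$ and $g(S/d)=\frac{1}{s}\sum_{j=1}^{s-1}(s+jk)-\frac{s-1}{2}=\frac{(s-1)(k+1)}{2}$, and a one-line subtraction gives $2g(S/d)-F(S/d)=s-1$, which is the asserted identity. The main obstacle I anticipate is the careful bookkeeping in the second step: handling the $\lceil m/d\rceil$ correction when translating the constraint from $n$ to $n'$, and rigorously verifying that $s+j_rk$ is truly the smallest element of $S/d$ in its residue class modulo $s$ (in particular, ruling out any gain from choosing $m>1$ with $n'$ in the shifted class $n'\equiv j_r\pmod{s}$).
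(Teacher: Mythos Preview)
Your argument is correct, and the concern you flag at the end is not a real obstacle: once you know $n'\equiv j_r\pmod{s}$ with $n'\geq 0$ and $1\leq j_r\leq s-1$, the smallest admissible $n'$ is $j_r$ itself; combined with $m\geq 1$ and $s,k>0$, this forces $ms+n'k\geq s+j_rk$ immediately, so no choice with $m>1$ can do better. The paper proceeds somewhat differently. Instead of computing $\Ap(S/d,s)$ directly, it first proves the structural fact $S/d=\langle s,s+k,\ldots,s+(s-1)k\rangle$, reducing to the case $d=1$, and then cites formulas for the genus and Frobenius number of such arithmetic-progression semigroups from \cite{Omidali}. Your route is more self-contained, relying only on Selmer's formulas (already stated as Theorem~\ref{selmer}) and in effect re-deriving the cited results; the paper's route, on the other hand, makes explicit the pleasant observation that the quotient is again a semigroup of the same type with $a$ replaced by $s$, which is of independent interest and which your Ap\'ery-set computation establishes implicitly.
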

\begin{proof}
We first reduce to the case $d=1$. Let $x_{i,a}$ denote the unique element of the set $\{0,1,\ldots,a-1\}$ such that $x_{i,a}k\equiv i\pmod a$ (the uniqueness comes from the assumption that $\gcd(a,k)=1$). We claim that the elements of $S$ that are congruent to $i$ modulo $a$ are precisely the integers of the form $ta+x_{i,a}k$ for positive integers $t$. To see this, notice that every nonnegative integer linear combination of generators of $S$ has the form $ta+mk$; in order to have $ta+mk\equiv i\pmod a$, we need $m\equiv x_{i,a}\pmod a$. Now, every element in $S/d$ is of the form $(ta+x_{di,a})/d$ for $i\in\{0,1,\ldots,s\}$. Since $x_{di,a}/d=x_{i,s}$, we have $S/d=\langle s,s+k,s+2k,\ldots,s+(s-1)k \rangle$. This shows that it suffices to prove the desired result in the case in which $d=1$ and $s=a$. However, this follows immediately from Selmer's formula for the genus (Theorem \ref{selmer}) along with Proposition 2.6 and Theorem 2.8 in \cite{Omidali}. Specifically, with $d=1$ and $s=a$, we have \[g(S/d)=g(S)=\frac{(k+1)(a-1)}{2}\quad\text{and}\quad F(S/d)=F(S)=k(a-1). \qedhere\]
\end{proof}

Some useful facts concerning the numerical semigroup $S$ follow from the Proposition \ref{quotient}. It is easy to see that
\[S=\{as+kt:s\geq1,t\geq0\}=\langle a,k\rangle\setminus\{k,2k,\ldots,(a-1)k\}.\]
Suppose $d\mid(a-1)k$. Since $F(\langle a,k\rangle)=ak-a-k<(a-1)k$, we know that $F(S/d)=(a-1)k/d$. Furthermore, $g(S/d)$ is equal to the sum of $g(\langle a,k\rangle/d)$ and the number of multiples of $d$ in $\{k,2k,\ldots,(a-1)k\}$. Thus, we obtain the following result.

\begin{prop}
Let $S=\langle a,a+k,a+2k,\ldots,a+(a-1)k\rangle$, where $a$ and $k$ are relatively prime positive integers. If $d\mid k$, then $g(S/d)=\dfrac{F(S/d)+a-1}{2}$.
\end{prop}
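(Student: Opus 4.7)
The plan is to leverage the two observations the authors record immediately before the statement: first, that $g(S/d)$ equals $g(\langle a,k\rangle/d)$ plus the number of multiples of $d$ in the set $\{k,2k,\ldots,(a-1)k\}$; second, that $F(S/d)=(a-1)k/d$ whenever $d\mid (a-1)k$. Under the hypothesis $d\mid k$ both of these simplify drastically: all $a-1$ elements of $\{k,2k,\ldots,(a-1)k\}$ are multiples of $d$, so $g(S/d)=g(\langle a,k\rangle/d)+(a-1)$, and $F(S/d)=(a-1)k/d$ with no further work.

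The only step that actually requires thought is evaluating $g(\langle a,k\rangle/d)$. I would show that $\langle a,k\rangle/d=\langle a,k/d\rangle$. For the forward inclusion, if $dx=ua+vk$ for nonnegative integers $u,v$, then reducing modulo $d$ and using $\gcd(a,d)=1$ (which follows from $\gcd(a,k)=1$ together with $d\mid k$) forces $d\mid u$; writing $u=du'$ yields $x=u'a+v(k/d)\in\langle a,k/d\rangle$. The reverse inclusion is immediate because multiplying any element of $\langle a,k/d\rangle$ by $d$ clearly lands in $\langle a,k\rangle$. Sylvester's theorem (Theorem~\ref{SylvesterTheorem}), applied to the coprime pair $a$ and $k/d$, then gives $g(\langle a,k/d\rangle)=(a-1)(k/d-1)/2$.

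Assembling the pieces, one computes
\[
g(S/d)=\frac{(a-1)(k/d-1)}{2}+(a-1)=\frac{(a-1)(k/d+1)}{2}=\frac{F(S/d)+a-1}{2},
\]
which is the claimed identity. There is no real obstacle here: the only point that demands care is the coprimality argument used to identify $\langle a,k\rangle/d$ with $\langle a,k/d\rangle$, after which Sylvester's theorem and the setup recalled from the previous paragraph do the rest of the work.
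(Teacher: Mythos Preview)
Your proposal is correct and follows essentially the same route as the paper's own proof: use the preceding observations to write $g(S/d)=g(\langle a,k\rangle/d)+(a-1)$ and $F(S/d)=(a-1)k/d$, identify $\langle a,k\rangle/d$ with $\langle a,k/d\rangle$, and finish with Sylvester's formula. The only difference is that you spell out the verification of $\langle a,k\rangle/d=\langle a,k/d\rangle$ via the coprimality argument, whereas the paper simply declares this step ``straightforward to check.''
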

\begin{proof}
This follows from the above analysis. Specifically, when $d\mid k$, we have $F(S/d)=(a-1)k/d$. It is also straightforward to check that $\langle a,k\rangle/d=\langle a,k/d\rangle$. Thus, 
\[
g(S/d)=g(\langle a,k\rangle/d)+(a-1)=g(\langle a,k/d\rangle)+(a-1)=\frac{(a-1)(k/d+1)}{2}\] \[=\frac{F(S/d)+a-1}{2}.\qedhere
\]
\end{proof}

The results in this section lead naturally to the following problem. 

\begin{problem}
Let $S =\langle a,a+k,a+2k,\dots,a+\ell k \rangle$, where $a$ and $k$ are relatively prime positive integers and $3 \leq \ell  \leq a-2$. Find $g(S/d)$ and $F(S/d)$, as well as relationships between these quantities, for positive integers $d$.
\end{problem}

\section{Acknowledgments}
A. Adeniran, C. Defant, Y. Gao, C. Hettle, Q. Liang, H. Nam, and A. Volk were partially supported by NSF-DMS grant \#1603823 "Collaborative Research: Rocky Mountain Great Plains Graduate Research Workshops in Combinatorics" and by NSF-DMS grant \#1604458, "Collaborative Research: Rocky Mountain Great Plains Graduate Research Workshops in Combinatorics."
S. Butler and P. E. Harris were partially supported by NSA grant \#H98230-18-1-0017, "The 2018 and 2019 Rocky Mountain -- Great Plains Graduate Research Workshops in Combinatorics." C. Defant was also supported by a Fannie and John Hertz Foundation Fellowship and an NSF Graduate Research Fellowship.

\end{document}